\documentclass[11pt]{amsart}
\usepackage{hyperref}
\usepackage[capitalize, nameinlink]{cleveref}
\crefname{theorem}{Theorem}{Theorems}
\crefname{proposition}{Proposition}{Propositions}
\crefname{observation}{Observation}{Observations}
\crefname{lemma}{Lemma}{Lemmas}
\crefname{claim}{Claim}{Claims}
\crefname{problem}{Problem}{Problems}
\crefname{conjecture}{Conjecture}{Conjectures}
\crefname{question}{Question}{Questions}
\crefname{example}{Example}{Examples}
\crefname{fact}{Fact}{Facts}


\usepackage{amssymb}
\usepackage[bbgreekl]{mathbbol}
\usepackage{graphicx}
\usepackage{ifthen}
\usepackage{pict2e}
\usepackage{xargs}
\usepackage{xspace}
\usepackage{xcolor}
\usepackage{pgf,tikz}
\usepackage{pgfplots}
\usepackage{environ}
\usepackage{caption}
\usetikzlibrary{arrows}
\usepackage{enumitem}
\usepackage{xifthen}
\usepackage{comment}
\usepackage{soul}


\DeclareFontFamily{U}{mathx}{\hyphenchar\font45}
\DeclareFontShape{U}{mathx}{m}{n}{
	<5> <6> <7> <8> <9> <10>
	<10.95> <12> <14.4> <17.28> <20.74> <24.88>
	mathx10
}{}
\DeclareSymbolFont{mathx}{U}{mathx}{m}{n}
\DeclareMathSymbol{\bigtimes}{1}{mathx}{"91}

\sloppy

\newcounter{dummy}
\makeatletter
\newcommand\myitem[1][]{\item[#1]\refstepcounter{dummy}\def\@currentlabel{#1}}
\makeatother

\makeatletter
\newsavebox{\measure@tikzpicture}
\NewEnviron{scaletikzpicturetowidth}[1]{%
	\def\tikz@width{#1}%
	\begin{lrbox}{\measure@tikzpicture}%
		\BODY
	\end{lrbox}%
	\pgfmathparse{#1/\wd\measure@tikzpicture}%
	\BODY
}
\makeatother

\DeclareSymbolFontAlphabet{\mathbb}{AMSb}

\newcommand{\thistheoremname}{}
\newtheorem*{genericthm*}{\thistheoremname}
\newenvironment{namedthm*}[1]
{\renewcommand{\thistheoremname}{#1}%
	\begin{genericthm*}}
	{\end{genericthm*}}


\newcommand{\Bairespace}[1][]{
	\ifthenelse{\equal{#1}{}}{\functions{\N}{\N}}{\functions{#1}{\N}}
}
\newcommand{\bbL}{\mathbb{L}}
\newcommand{\bbX}{\mathbb{X}}

\newcommand{\Cantorspace}[1][]{
	\ifthenelse{\equal{#1}{}}{\functions{\N}{2}}{\functions{#1}{2}}
}

\newcommandx{\concatenation}[2][1 = undefined, 2 = undefined]{
	\ifthenelse{\equal{#1}{undefined}}{{}\smallfrown}{
		\ifthenelse{\equal{#2}{undefined}}{\bigoplus #1}{\bigoplus_{#1} #2}
	}
}

\newcommandx{\functions}[3][3 =]{
	\ifthenelse{\equal{#3}{}}{#2^{#1}}{#2_{#3}^{#1}}
}

\newcommand{\Gzero}[1][]{
	\ifthenelse{\equal{#1}{}}
	{\mathbb{G}_0}
	{\mathbb{G}_{0,n}}
}
\newcommandx{\Hzero}[2][2 = undefined]{
	\ifthenelse{\equal{#2}{undefined}}
	{\mathbb{H}_{#1}}
	{\mathbb{H}_{#1, #2}}
}
\newcommandx{\intersection}[2][1 =, 2 =]{
	\ifthenelse{\equal{#1}{}}{\cap}{
		\ifthenelse{\equal{#2}{}}{\bigcap #1}{{\bigcap_{#1} #2}}
	}
}
\newcommand{\Lzero}[1][]{\ifthenelse{\equal{#1}{}}{\bbL_0}{L_{0, #1}}}
\newcommand{\Lzerospace}[1][]{\ifthenelse{\equal{#1}{}}{\bbX_0}{X_{0, #1}}}

\newcommand{\modulo}[1]{\ (\text{mod } 2)}
\newcommand{\N}{\mathbb{N}}

\newcommandx{\product}[2][1 =, 2 =]{
	\ifthenelse{\equal{#1}{}}{\times}{
		\ifthenelse{\equal{#2}{}}{\prod #1}{{\prod_{#1} #2}}
	}
}

\newcommandx{\sequence}[2][2 = undefined]{
	\ifthenelse{\equal{#2}{undefined}}{(#1)}{
		(#1)_{#2}
	}
}

\newcommandx{\set}[2][2 = undefined]{
	\ifthenelse{\equal{#2}{undefined}}{\{ #1 \}}{
		\{ #1 \suchthat #2 \}
	}
}
\newcommandx{\sets}[3][3 =]{
	\ifthenelse{\equal{#3}{}}{[#2]^{#1}}{[#2]^{#1}_{#3}}
}

\newcommand{\suchthat}{\mid}

\newcommandx{\union}[2][1 =, 2 =]{
	\ifthenelse{\equal{#1}{}}{\cup}{
		\ifthenelse{\equal{#2}{}}{\bigcup #1}{{\bigcup_{#1} #2}}
	}
}




\newtheorem{theorem}{Theorem}[section]
\newtheorem{lemma}[theorem]{Lemma}

\newtheorem{corollary}[theorem]{Corollary}

\newtheorem{problem}[theorem]{Problem}

\theoremstyle{definition}

\newtheorem{definition}[theorem]{Definition}

\numberwithin{equation}{section}



\newcommand{\bd}{\begin{definition}}
	\newcommand{\ed}{\end{definition}}

\DeclareMathOperator{\dist}{dist}
\DeclareMathOperator{\didistance}{didist}
\newcommand{\mc}{\mathcal}

\newcommand{\distance}[3]{\ifthenelse{\isempty{#3}}{\dist(#1,#2)}{\dist^{#3}(#1,#2)}}
\newcommand{\didist}[3]{\ifthenelse{\isempty{#3}}{\didistance(#1,#2)}{\didistance^{#3}(#1,#2)}}
\newcommand{\digraph}[3]{\ifthenelse{\equal{#1}{b}}{\mathbb{#2}_{#3}}
	{{#2}_{#3}}}
\newcommand{\linegraph}[3]{\ifthenelse{\equal{#1}{b}}{\mathbb{#2}_{#3}}
	{#2_{#3}}}

\newcommand{\underlyingspace}[3]{\ifthenelse{\equal{#1}{b}}{\mathbb{#2}_{#3}}
	{#2_{#3}}}
\newcommand{\distanceset}[2]{\ifthenelse{\isempty{#2}}{D(#1)}{D^{#2}(#1)}}







\def\vertex{
	\begin{tikzpicture}[baseline=-0.6ex]
	\filldraw 
	(0,0) circle (1.1pt);
	\end{tikzpicture}
}

\def\edge{
	\begin{tikzpicture}[baseline=-0.6ex]
	\filldraw 
	(0,-0.1) circle (1.1pt) -- (0,0.1) circle (1.1pt);
	\end{tikzpicture}
}

\def\nbr{
	\begin{tikzpicture}[baseline=-0.6ex]
	\filldraw 
	(0,0.1) circle (1.1pt);
	\draw (0,-0.1) -- (0,0.1);
	\end{tikzpicture}
}
\author{Endre Cs\'oka}

\address{Alfr\'ed R\'enyi Institute of Mathematics, Reáltanoda u. 13-15., 1053 Budapest, Hungary}
\email{csokaendre@gmail.com}

\author{Zolt\'an Vidny\'anszky}
\address{E\"otv\"os Lor\'and University, Institute of Mathematics, P\'azm\'any P\'eter stny. 1/C, 1117 Budapest, Hungary}
\email{zoltan.vidnyanszky@ttk.elte.hu}
\thanks{The first author was supported by National Research, Development and Innovation Office (NKFIH) grant KKP no. 138270. The second author was supported by Hungarian Academy of Sciences Momentum Grant no. 2022-58 and National Research, Development and Innovation Office (NKFIH) grants no.~113047, ~129211. }
\begin{document}

	\thanks{}
	
	\keywords{}
	
	\subjclass[2020]{Primary 60K50, Secondary 03E25}
	
	\title[]{FIID homomorphisms and entropy inequalities}

	
	
	\maketitle
	
	\begin{abstract}
		
	We investigate the existence of FIID homomorphisms from regular trees to finite graphs. Using entropy inequalities we show that there are graphs with arbitrarily large chromatic number to which there is no FIID homomorphism from a $3$-regular tree.

	\end{abstract}
	\section{Introduction}
	In the past decade, a large amount of work has been aimed at the understanding of \emph{factor of i.i.d.\ (FIID)} processes in general and FIID tree colorings in particular (see, e.g., \cite{ArankaFeriLaci,backhausz2018large,BackVir,BGHV,csoka2015invariant,GamarnikSudan,HarangiVirag,HoppenWormald,RahVir,LyonsTrees,Timar1}). Roughly speaking (for the formal definition, see Section \ref{s:prel}), we consider a $d$-regular tree on which we want to solve a graph theoretic problem such as vertex coloring, perfect matching, or finding a maximal independent set, etc. The solution has to respect the automorphisms of the tree but it is allowed to use uniformly distributed independent random reals at every vertex. 
    One of the most important feautre of FIID processes that they can be applied with an arbitrarily small error to $d$-regular graphs with large essential girth (i.e., to $d$-regular graphs which contain a small number of cycles). Random $d$-regular graphs have large essenial girth, and the optimum of many optimization problems on these random graphs are often attained by FIID processes.
 
	Motivated by the question of proper colorings, one can formulate the next general problem (here, $T_d$ stands for the $d$-regular tree). 	
		
	\begin{problem}
		\label{p:main}
		Is it possible to characterize the finite graphs $H$ to which $T_d$ admits an FIID homomorphism?
	\end{problem}
	
	Obtaining an affirmative answer, i.e., a complete characterization, seems to be an overly optimistic goal: despite a significant amount of effort, already the existence of homomorphisms of $T_3$ to concrete, small 
    graphs is still open. For example, a negative answer in the case of $H=C_5$ would suggest that large (finite) $3$-regular random graphs do not admit a homomorphism to $C_5$, answering the notoriously hard pentagon problem of Ne\v{s}et\v{r}il \cite{nesetril1999aspects}. 
	
	In this paper, we provide a simple argument based on entropy inequalities to exclude homomorphisms to certain graphs, showing the following. 
	
	\begin{theorem}
		\label{t:main} 
		Let $r \in \N$ be arbitrary. There exists a constant $C_r \in \N$ such that there is no FIID homomorphism from $T_3$ to any $r$-regular graph $H$ with girth at least $C_r$. 
	\end{theorem}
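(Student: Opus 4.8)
The plan is to assume, toward a contradiction, that an FIID homomorphism $\phi \from T_3 \goesto H$ exists for some $r$-regular graph $H$ of very large girth, and then to apply the FIID entropy inequality \emph{twice}, at two different scales. Write $X_v = \phi(v)$ for the resulting invariant $V(H)$-valued process, let $\mu$ be its vertex marginal and let $\nu$ be its (symmetric) marginal on oriented edges. Fix a vertex $o$ and a neighbour $o'$, and abbreviate the Shannon entropies $a \defeq \mathbf{H}(X_o)$ and $b \defeq \mathbf{H}(X_o, X_{o'})$; by invariance these do not depend on the chosen vertex or edge. The only external input is the FIID entropy inequality (the nonnegativity of the relevant $f$-invariant $\tfrac32 b - 2a$), which in this normalization reads $b \ge \tfrac{4}{3} a$. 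The first move is to bound the vertex entropy. Since $H$ is $r$-regular, conditionally on $X_o$ the neighbour $X_{o'}$ ranges over at most $r$ values, so the chain rule gives $b = a + \mathbf{H}(X_{o'}\mid X_o) \le a + \log r$. Combining with $b \ge \tfrac43 a$ yields $\tfrac13 a \le \log r$, i.e.
\[
a = \mathbf{H}(X_o) \le 3\log r =: M .
\]
Crucially, $M$ is independent of $\absolutevalue{V(H)}$: the image of any FIID homomorphism must be \emph{entropically concentrated}.

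Next I would convert this concentration into a combinatorial statement using the girth. Fix a threshold $\tau = r^{-K}$, with $K = K(r)$ to be chosen, and set $V_{\ge} = \{ x \in V(H) : \mu(x) \ge \tau \}$. Then $\absolutevalue{V_{\ge}} \le \tau^{-1} = r^{K}$, while the entropy bound controls the light part: $M \ge \sum_{\mu(x) < \tau}\mu(x)\log\tfrac{1}{\mu(x)} \ge \mu(V \setminus V_{\ge})\log\tfrac1\tau$, so $\mu(V\setminus V_\ge) \le M/\log\tfrac1\tau = 3/K$. Now assume $\mathrm{girth}(H) > r^{K}$. Since $V_\ge$ has fewer than $\mathrm{girth}(H)$ vertices, the induced subgraph $H[V_\ge]$ can contain no cycle, hence is a forest and in particular bipartite. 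Fix a proper $2$-colouring $\chi \from V_\ge \goesto \{0,1\}$, extend $\chi$ by $0$ off $V_\ge$, and set $W_v \defeq \chi(X_v)$. As a fixed per-coordinate factor of $X$, the process $W$ is again FIID, now valued in $\{0,1\}$.

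Finally I would apply the FIID inequality to $W$. On the event that both $X_o, X_{o'} \in V_\ge$ — which fails with probability at most $2\mu(V\setminus V_\ge) \le 6/K$ — the pair $(X_o, X_{o'})$ is an edge of the \emph{forest} $H[V_\ge]$, whence $\chi(X_o) \neq \chi(X_{o'})$. Thus $W$ is a proper $2$-colouring of $T_3$ up to a defect of probability at most $6/K$. Using the edge-exchangeability of $\nu$ (the automorphism of $T_3$ flipping $(o,o')$, so that $(W_o,W_{o'}) \stackrel{d}{=} (W_{o'},W_o)$), one checks that $W$ is nearly balanced, so that $\mathbf{H}(W_o) \ge \tfrac34\log 2$ once $K$ is large, while the small defect forces $\mathbf{H}(W_{o'}\mid W_o) \le \eta(6/K)$ with $\eta(\delta) \to 0$ as $\delta \to 0$. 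But the FIID inequality applied to $W$ gives $\mathbf{H}(W_{o'}\mid W_o) = \mathbf{H}(W_o,W_{o'}) - \mathbf{H}(W_o) \ge \tfrac13 \mathbf{H}(W_o) \ge \tfrac14\log 2$. Choosing $K = K(r)$ large enough that $\eta(6/K) < \tfrac14\log 2$, and then setting $C_r \defeq r^{K}+1$, produces the contradiction and proves the theorem.

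The step I expect to be the main obstacle is the passage from the \emph{entropic} concentration $a \le M$ to the \emph{combinatorial} fact that the essential image is a forest: a bounded Shannon entropy does not bound the size of the support, so one is forced to threshold at $\tau$ and balance two competing demands — keeping $\absolutevalue{V_\ge}$ below the girth (so $H[V_\ge]$ is a forest) while keeping the light mass $\mu(V\setminus V_\ge)$ small enough that the derived colouring $W$ has negligible defect. Checking that $W$ is genuinely FIID and close to balanced, so that $\mathbf{H}(W_o)$ does not degenerate, are the remaining delicate points; the rest is routine bookkeeping with Shannon-entropy inequalities.
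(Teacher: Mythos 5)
Your proposal is correct, and its first two steps coincide with the paper's own: the bound $h_f(\vertex)\le 3\ln r$ is derived exactly as in the paper's Lemma \ref{l:entropy} (chain rule plus the $\tfrac{4}{3}$-inequality of Theorem \ref{t:edgevertex}), and your threshold set $V_{\ge}$ plays the same role as the paper's set $S$ of the $C_r-1$ heaviest atoms in Lemma \ref{l:largepreimage} --- both are sets of size smaller than the girth whose preimage carries all but a small amount of mass, so that the induced subgraph is a forest and admits a proper $2$-coloring. Where you genuinely diverge is the endgame. The paper finishes by invoking its Theorem \ref{t:2col}: the composition of the $2$-coloring with $f$ is a partial FIID $2$-coloring of $T_3$ with domain of measure $\ge 1-c_0$, which that theorem forbids; this is an external input, resting on Bollob\'as's independence-ratio bound for random $3$-regular graphs together with the emulation of FIID processes on such graphs. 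You instead apply the entropy inequality a \emph{second} time, to the derived two-valued process $W_v=\chi(X_v)$: edge-exchangeability gives near-balance, hence $h(W_o)\ge\tfrac34\ln 2$ for small defect; Fano gives $h(W_{o'}\mid W_o)\le\eta(6/K)\to 0$; and Theorem \ref{t:edgevertex} (which, as the paper states, applies to arbitrary FIID processes, not only homomorphisms --- and $W$ is FIID, being a per-coordinate image of an FIID process) forces $h(W_{o'}\mid W_o)\ge\tfrac13 h(W_o)$, a contradiction once $K$ exceeds a universal constant. All of these steps check out. The trade-off is this: your argument uses a single black box (the entropy inequality) instead of two, in effect re-proving entropically the ``no FIID near-$2$-coloring'' content of Theorem \ref{t:2col} (a partial coloring with domain of measure $1-\varepsilon$, extended by a constant, has defect at most $2\varepsilon$), and it yields a fully explicit constant $C_r=r^K+1$ with $K$ universal; the paper's route has a shorter endgame, but its constant $C_r>r^{3/c_0}$ is only as explicit as the constant $c_0$ inherited from Bollob\'as's theorem.
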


	\subsection{Discussion and some corollaries}
	
	 \cite[Problem 8.12]{kechris2015descriptive} asks for the characterization of finite graphs $H$ to which each $d$-regular acyclic Borel graph admits a Borel homomorphism. 
	As FIID processes on $T_3$ can be considered as measurable maps from a fixed $3$-regular Borel graph, our theorem gives some information about this problem as well. 
	\begin{corollary}
		There exists a $3$-regular acyclic Borel graph, which does not admit a Borel homomorphism to any finite $r$-regular graph $H$ with girth at least $C_r$.   
	\end{corollary}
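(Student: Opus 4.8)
The plan is to exhibit the canonical $3$-regular acyclic Borel graph underlying the i.i.d.\ process on $T_3$ and to argue that a Borel homomorphism from it would furnish an FIID homomorphism, contradicting \cref{t:main}.

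First I would fix the graph. Let $G = \Z_2 * \Z_2 * \Z_2$ with generating set $\{s_1, s_2, s_3\}$ consisting of three involutions, so that the Cayley graph of $G$ is exactly $T_3$. Consider the Bernoulli shift action $G \curvearrowright ([0,1]^G, \mu)$, where $\mu$ is the product of Lebesgue measures, and let $\mathbb{G}$ be the Borel graph on the free part $\mathrm{FR}([0,1]^G)$ (a conull, $G$-invariant Borel set) in which $\omega$ is joined to $s_i \cdot \omega$ for each $i \in \{1,2,3\}$. Since $G$ acts freely on $\mathrm{FR}([0,1]^G)$ and has no nontrivial relations, every connected component of $\mathbb{G}$ is isomorphic to the Cayley graph of $G$, namely $T_3$; in particular $\mathbb{G}$ is a $3$-regular acyclic Borel graph, and it does not depend on $H$.

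Next I would invoke the correspondence already recorded in the discussion above (and made precise in \cref{s:prel}), which identifies FIID homomorphisms from $T_3$ to a finite graph $H$ with $\mu$-measurable maps $f \from \mathrm{FR}([0,1]^G) \to V(H)$ satisfying $\{f(\omega), f(s_i \cdot \omega)\} \in E(H)$ for $\mu$-almost every $\omega$ and each $i$; this is precisely a $\mu$-measurable homomorphism $\mathbb{G} \to H$ defined almost everywhere. The key observation is then immediate: a Borel homomorphism $c \from \mathbb{G} \to H$ is in particular Borel, hence $\mu$-measurable, and satisfies $\{c(\omega), c(s_i \cdot \omega)\} \in E(H)$ for \emph{every} $\omega \in \mathrm{FR}([0,1]^G)$, so it satisfies the required condition $\mu$-almost everywhere and therefore constitutes an FIID homomorphism from $T_3$ to $H$.

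Putting these together, suppose toward a contradiction that $\mathbb{G}$ admitted a Borel homomorphism to some finite $r$-regular graph $H$ of girth at least $C_r$. By the previous paragraph this would yield an FIID homomorphism from $T_3$ to $H$, contradicting \cref{t:main}. Hence $\mathbb{G}$ is a $3$-regular acyclic Borel graph admitting no Borel homomorphism to any such $H$, as claimed. The only point requiring genuine care—and the step I would expect to be the main (if modest) obstacle—is the precise verification of the dictionary between FIID homomorphisms and almost-everywhere measurable homomorphisms of $\mathbb{G}$, in particular matching the equivariance convention adopted in \cref{s:prel}; once that is in place, the corollary follows from the elementary remark that ``Borel and everywhere'' implies ``measurable and almost everywhere.''
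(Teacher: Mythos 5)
Your overall skeleton---replace the FIID statement by a Borel one via a suitable graphing, then quote \cref{t:main}---is the same as the paper's, but the graphing you chose is not the one for which the paper's dictionary holds, and this is exactly where the argument breaks. The correspondence recorded in \cref{s:prel} concerns the graph $\mathcal{G}$ defined on $[0,1]^{V(T_3)}$ using the \emph{full} automorphism group: $x$ is joined to $x\circ\psi$ for $\psi\in Aut(T_3)$ carrying the root to a neighbor, and an a.e.-defined Borel homomorphism $h$ from $\mathcal{G}$ to $H$ is turned into a factor map by re-rooting, $\phi(x)(v)=h(\gamma^{-1}\cdot x)$ with $\gamma(v_0)=v$; equivariance under all of $Aut(T_3)$ comes for free because the edges of $\mathcal{G}$ were defined by arbitrary automorphisms. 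Your graph $\mathbb{G}$ is instead the Schreier graphing of the Bernoulli shift of $G=\Z_2 * \Z_2 * \Z_2$. A Borel homomorphism $c\from\mathbb{G}\to H$ yields, via $\phi(\omega)(g)=c(g^{-1}\cdot\omega)$, only a \emph{$G$-equivariant} map. The paper's definition of an FIID process requires equivariance under all of $Aut(T_3)$, which is strictly more demanding: $G$ acts simply transitively on $V(T_3)$, whereas $Aut(T_3)$ also contains the root stabilizer, and a $G$-factor need not be invariant under it. So the ``dictionary'' you invoke---identifying FIID homomorphisms with measurable maps on $\mathrm{FR}([0,1]^G)$ satisfying the edge condition a.e.---is valid only in the direction you do not need (every FIID homomorphism gives such a map); the converse direction, which your contradiction requires, is not established by \cref{s:prel} and is not true by mere ``convention matching.''

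This is not a modest bookkeeping issue. To upgrade a $G$-equivariant almost-everywhere homomorphism to an $Aut(T_3)$-equivariant one, one would essentially need to produce, FIID over $Aut(T_3)$, an identification of the tree with the Cayley graph of $G$, i.e.\ an FIID proper edge $3$-coloring (Schreier decoration) of $T_3$---something the paper does not provide and which is a substantive problem in its own right (known for even-degree trees, not available here). Alternatively, your route could be repaired by proving that the inputs to the main theorem (\cref{t:edgevertex} and \cref{t:2col}) hold for the larger class of $G$-factors and rerunning the proof of \cref{t:main} in that category; this may well be true, but it goes beyond what the paper states and is absent from your write-up. The fix consistent with the paper is simply to use the graph $\mathcal{G}$ of \cref{s:prel} (the Bernoulli graphing over $Aut(T_3)$, restricted to a suitable conull set) in place of your $\mathbb{G}$; with that choice, your final paragraph---``Borel and everywhere implies measurable and almost everywhere, hence FIID, contradiction''---is exactly the paper's intended proof.
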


   Connected to the problem of Kechris and Marks and a generalization of Marks' determinacy method \cite{marksdet}, in \cite{brandt2021local} (see also \cite{brandt2021homomorphism}) it was shown that there are graphs $H$ with chromatic number $2d-2$ such that there is no Borel homomorphism from a $d$-regular acyclic Borel graph to $H$. By classical results of Bollob\'as \cite{bollobas1981independence}, for any $g$ there is an $r$-regular graph $H$ with girth at least $g$ and $\chi(H) \geq \frac{r}{2\log r}$.  Thus, using our main result, we can eliminate the $2d-2$ upper bound discussed above:
	\begin{corollary}
		There are graphs of arbitrarily large chromatic number to which there is no FIID homomorphism from $T_3$. 
	\end{corollary}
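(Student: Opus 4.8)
The plan is to read the statement directly off \Cref{t:main} together with the quoted theorem of Bollob\'as, so that the only real work is the bookkeeping of the parameters. Fix an arbitrary target value $k \in \N$ for the chromatic number. Since $\frac{r}{2\log r} \to \infty$ as $r \to \infty$, I would first choose $r = r(k)$ large enough that $\frac{r}{2\log r} \ge k$. With this $r$ fixed, \Cref{t:main} supplies a threshold $C_r \in \N$ with the property that no $r$-regular graph of girth at least $C_r$ receives an FIID homomorphism from $T_3$.

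Now I would invoke Bollob\'as's result in the form quoted above, applied with the girth parameter set to $g := C_r$: there exists an $r$-regular graph $H$ with girth at least $C_r$ and $\chi(H) \ge \frac{r}{2\log r} \ge k$. By the choice of $C_r$, this graph $H$ admits no FIID homomorphism from $T_3$, while its chromatic number is at least $k$. As $k$ was arbitrary, this produces graphs of arbitrarily large chromatic number to which there is no FIID homomorphism from $T_3$, which is exactly the assertion.

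I do not expect a serious obstacle here beyond the two black boxes the argument rests on, and the single point that genuinely needs care is the order of the quantifiers. The girth threshold $C_r$ in \Cref{t:main} depends on $r$, so one must first fix $r$ (and hence $C_r$) and only afterwards apply Bollob\'as with the prescribed girth $g = C_r$; what makes this legitimate is precisely that Bollob\'as's theorem allows the girth to be taken arbitrarily large while still guaranteeing chromatic number of order $\frac{r}{2\log r}$. The remaining ingredient, that $\frac{r}{2\log r} \to \infty$, is immediate.
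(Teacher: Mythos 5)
Your proposal is correct and matches the paper's own (implicit) argument exactly: the paper derives this corollary by combining \Cref{t:main} with the quoted Bollob\'as result, choosing $r$ large enough that $\frac{r}{2\log r}$ exceeds the desired chromatic number and then taking the girth parameter to be at least $C_r$. Your handling of the quantifier order (fix $r$ first, then apply Bollob\'as with girth $g = C_r$) is precisely the bookkeeping the paper leaves to the reader.
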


	By the correspondence developed in \cite{brandt2021local} and \cite{bernshteyn2023distributed} (see also \cite{RahVir}) between descriptive combinatorics and local algorithms, we get the following.  
	
	\begin{corollary}
		There is no $o(\log n)$-round randomized local algorithm which outputs a homomorphism from an acyclic graph with degrees $\leq 3$ of size $n$ to an $r$-regular graph of girth at least $C_r$.  
	\end{corollary}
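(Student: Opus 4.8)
The plan is to derive the statement as the contrapositive of the transfer principle of \cite{brandt2021local,bernshteyn2023distributed} relating sublogarithmic randomized LOCAL algorithms to factor-of-i.i.d.\ solutions, specialized to \Cref{t:main}. First I would record that, with the target graph $H$ fixed, the task ``output a homomorphism to $H$'' is a locally checkable labelling problem: the admissible labels are the vertices of $H$, and the single local constraint requires the labels of any two adjacent input vertices to be adjacent in $H$. Since the input class here is acyclic graphs of maximum degree $3$, this sits exactly in the framework to which the descriptive-combinatorics/local-algorithms correspondence applies, with the relevant limit object being $T_3$.

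The core step is to convert a hypothetical fast algorithm into an FIID homomorphism. Suppose toward a contradiction that $\mathcal{A}$ is a randomized LOCAL algorithm that, on every acyclic graph of maximum degree $3$ on $n$ vertices, outputs a homomorphism to $H$ with global failure probability $o(1)$ while inspecting only the radius-$t(n)$ neighbourhood of each vertex, where $t(n)=o(\log n)$. I would feed $\mathcal{A}$ the finite balanced truncations $G_n$ of $T_3$, rooted at $\rho_n$; such a tree on $n$ vertices has depth $\approx \log_2 n$. Because $t(n)=o(\log n)$ is eventually smaller than this depth, the radius-$t(n)$ view of $\rho_n$ in $G_n$ is isomorphic to the corresponding ball of $T_3$, and never reaches the boundary leaves. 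Hence the label that $\mathcal{A}$ assigns at $\rho_n$ is a fixed measurable function of a genuine $T_3$-ball together with the i.i.d.\ labels carried on it -- precisely the form of a finitary factor of i.i.d.\ process of radius $t(n)$.

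Passing to a limit then produces the contradiction. Using the convergence of the rooted labelled trees $\restriction{(G_n,\rho_n)}{}$ to $T_3$ together with a compactness and diagonalisation argument, I would extract from the maps computed by $\mathcal{A}$ a genuine FIID process on $T_3$; since the local homomorphism constraint is preserved under this limit and the failure probability tends to $0$, the limiting process is almost surely a homomorphism $T_3\to H$. This is exactly the content of the correspondence of \cite{brandt2021local,bernshteyn2023distributed} (see also \cite{RahVir}), and it contradicts \Cref{t:main}, which forbids an FIID homomorphism $T_3\to H$ when $H$ has girth at least $C_r$. The main obstacle is this last limiting step: one must check that the extracted limit is genuinely a factor of i.i.d.\ -- measurable with respect to, and equivariant under the tree automorphisms acting on, the i.i.d.\ labelling -- rather than merely an invariant random homomorphism, and that the vanishing \emph{global} failure probability forces almost-sure satisfaction of the \emph{local} constraints in the limit. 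These are precisely the points where the cited machinery is needed, so in the write-up I would invoke the transfer theorem directly and read off its contrapositive.
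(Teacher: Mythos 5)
Your proposal is correct and matches the paper's route: the paper derives this corollary purely by citing the correspondence between randomized sublogarithmic LOCAL algorithms and FIID processes from \cite{brandt2021local,bernshteyn2023distributed} (see also \cite{RahVir}) and combining it, in contrapositive form, with \Cref{t:main}. Your additional sketch of how that transfer works (truncated trees, non-boundary views, compactness limit) is a reasonable unpacking of the cited machinery, but in the end you invoke the same transfer theorem the paper does, so the arguments coincide.
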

	
	Thus, entropy inequalities provide a novel way of proving local model lower bounds for homomorphism problems on trees, and these bounds, to our knowledge, could not be attained by Brandt's round elimination method \cite{brandt19automatic}. 
	
	\textbf{Acknowledgments.} We are very grateful to \'Agnes Backhausz, Jan Greb\'ik and Viktor Harangi for their insightful suggestions. 
	
	\section{Preliminaries}
	\label{s:prel}

	We will describe two equivalent ways of talking about FIID processes. The classical definition is as follows. Let $\Gamma$ be a group acting on measurable spaces $(X,\mc{B})$ and $(Y,\mc{C})$. A map $\phi:X \to Y$ is \emph{$\Gamma$-equivariant} if for all $x \in X$ and $\gamma \in \Gamma$ we have 
	\[\phi(\gamma \cdot x)=\gamma \cdot \phi(x).\] 
	
	A measurable equivariant map is called a \emph{$\Gamma$-factor}. If $\mu$ is a measure on $(X,\mathcal{B})$ and $\phi:X \to Y$ is a $\Gamma$-factor, the measure $\phi_*\mu$ is called a \emph{$\Gamma$-factor of $\mu$}.
	
	Let $T_d$ stand for the $d$-regular tree and $Aut(T_d)$ be its automorphism group. If $S$ is a set, $Aut(T_d)$ acts on $S^{V(T_d)}$ by the \emph{(left-)shift action}, that is,
	\[(\gamma \cdot x) (v):=x(\gamma^{-1} \cdot v).\]
	
	Let $\mu$ be the uniform measure on the space $[0,1]^{V(T_d)}$, that is, the product of the Lebesgue measures on each copy of $[0,1]$. An \emph{FIID process} is an $Aut(T_d)$-factor of $\mu$ on $S^{V(T_d)}$, where $S$ is finite and $[0,1]^{V(T_d)}$ and $S^{V(T_d)}$ are considered with the algebra of Lebesgue-measurable sets. An \emph{FIID homomorphism to a graph $H$} is an FIID process on $V(H)^{V(T_d)}$ such that for $\mu$ almost every $x \in [0,1]^{V(T_d)}$ the image $\phi(x)$ is a homomorphism from $T_d$ to $H$.  
	
	A slightly different view is the following. One can define a graph on the space $[0,1]^{V(T_d)}$ as follows: fix a ``root", i.e., a distinguished element $v_0 \in V(T_d)$. Let $\{x,y\}$ form an edge in $\mc{G}$ if there is an automorphism $\psi \in Aut(T_d)$ such that $x \circ \psi=y$ and $(\psi(v_0),v_0) \in E(T_d)$. It is easy to see that there is a $\mu$ co-null Borel set on which $\mc{G}$ is $d$-regular, acyclic and Borel. Moreover, there is a correspondence between FIID homomorphisms to some $H$ in the above sense and Borel homomorphisms of $\mc{G}$ to $H$ defined on a co-null set: indeed, evaluating the factor map at the root yields the latter kind of homomorphism, and conversely, given an a.e. defined Borel homomorphism $h$ from $\mc{G}$ to $H$ we get a factor map $\phi$ by letting $\phi(x)(v)=h(\gamma^{-1} \cdot x)$, where $\gamma$ is arbitrary in $Aut(T_d)$ with $\gamma(v_0)=v$.

	\subsection{Entropy inequalities.} Recall that if $X$ is a discrete random variable, the \emph{entropy of $X$} is defined by 
	\[h(X)=-\sum_x \mathbb{P}(X=x)\ln  \mathbb{P}(X=x),\] 
    (if $\mathbb{P}(X=x)=0$ then the corresponding product is considered to be $0$), 
	while for two such variables $X,Y$ on the same space we define the \emph{conditional entropy} by
	\[h(X|Y)=-\sum_{x,y}\mathbb{P}(X=x,Y=y)\ln \frac{\mathbb{P}(X=x,Y=y)}{\mathbb{P}(Y=y)} .\]
	It is clear that 
	\[h(X,Y)=h(X)+h(X|Y),\]
	where $h(X,Y)$ stands for the \emph{joint entropy} of $X$ and $Y$, that is, the entropy of the joint distribution of $X$ and $Y$. 
	
	Now observe that for any $F \subset V(T_d)$ finite an FIID homomorphism to $H$ gives rise to an $V(H)^{F}$ valued discrete random variable. It follows from the $Aut(T_d)$ equivariance that this variable only depends on the $Aut(T_d)$-orbit of $F$. Thus, for an FIID homomorphism we can consider the entropy of the image in $H$ of a random vertex, random edge, random neighbor, denote these by $h_f(\vertex),h_f(\edge),h_f(\nbr),$ respectively.

	The following fundamental result is shown in \cite{backhausz2018large,bowen2010measure,RahVir} in various generalities (see also \cite{backhausz2018entropy}). 
	\begin{theorem}
		\label{t:edgevertex}
		For any FIID homomorphism $f$ (or more generally, FIID process) on $T_3$ we have
		\[\frac{4}{3}h_f(\vertex) \leq h_f(\edge).\]
		
	\end{theorem}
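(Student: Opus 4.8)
The plan is to recognize the quantity $Q(f) := \tfrac{3}{2}h_f(\edge) - 2h_f(\vertex)$ as a normalized (sofic) entropy of the process $f$ and to prove that this entropy is non-negative; since $Q(f)\ge 0$ is literally equivalent to $\tfrac{4}{3}h_f(\vertex)\le h_f(\edge)$, this finishes the proof. Throughout I may assume, after a routine approximation, that $f$ is a block factor of finite radius $R$ fed by i.i.d.\ labels from a finite alphabet $A$: discretizing $[0,1]$ and truncating the radius changes $h_f(\vertex)$ and $h_f(\edge)$ by an arbitrarily small amount, and both quantities are continuous under this approximation, so it suffices to treat block factors.

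First I would transfer the process to finite graphs. Since $T_3$ is the Cayley graph of $\Ztwocube$, I fix a sofic approximation by finite $3$-regular graphs $G_N$ on $N$ vertices whose girth tends to infinity (random $3$-regular graphs work). Feeding i.i.d.\ labels on the vertices of $G_N$ into the radius-$R$ block factor produces an honest finite-valued configuration $Y^{(N)}$. Because the girth of $G_N$ exceeds $2R+1$, the radius-$R$ neighbourhood of every vertex and of every edge of $G_N$ is isomorphic to the corresponding neighbourhood in $T_3$; hence the single-vertex and single-edge marginals of $Y^{(N)}$ coincide with those of $f$, so that $h(Y^{(N)}_v)=h_f(\vertex)$ and $h(Y^{(N)}_{u},Y^{(N)}_{w})=h_f(\edge)$ for every vertex $v$ and edge $\{u,w\}$ of $G_N$.

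The heart of the argument is the entropy-localization estimate
\[\frac{1}{N}\,h\bigl(Y^{(N)}\bigr)\;\le\; Q(f)+o(1)\qquad(N\to\infty).\]
Its meaning is that, for a factor of i.i.d., the joint entropy of the whole configuration is asymptotically governed by its vertex and edge marginals alone, the coefficients $\tfrac32$ and $-2$ being exactly the edge- and vertex-multiplicities dictated by the locally tree-like ($3$-regular, acyclic) structure. I would obtain it by a chain-rule expansion of $h(Y^{(N)})$ along an exhaustion of $G_N$ and a comparison with the $3$-regular Markov model carrying the same vertex and edge marginals, whose entropy density equals $Q(f)$; the gap between the true conditional entropies and their nearest-neighbour counterparts is controlled by the correlation decay enjoyed by factors of i.i.d. It is essential here that $f$ is FIID and not merely $Aut(T_3)$-invariant: the constant process $Y_v\equiv\xi$ with $\xi$ a fair coin is invariant, has $h_f(\edge)=h_f(\vertex)$, and violates the target inequality, so any correct proof must spend the hypothesis at this step. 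Granting the estimate, non-negativity is immediate: $Y^{(N)}$ is a genuine discrete random variable, so $h(Y^{(N)})\ge 0$, whence $Q(f)\ge -o(1)$, and letting $N\to\infty$ gives $Q(f)\ge 0$, i.e.\ $\tfrac{4}{3}h_f(\vertex)\le h_f(\edge)$.

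The main obstacle is exactly the localization estimate: showing that the higher-order mutual informations (the contributions of the cycles in $G_N$ and of long-range correlations) are $o(N)$, which is where the factor-of-i.i.d.\ hypothesis is indispensable. This is precisely the content established, at various levels of generality and in the equivalent language of the $f$-invariant, in the works cited above, and for the odd degree $d=3$ it is most naturally phrased through the $\Ztwocube$-action underlying $T_3$.
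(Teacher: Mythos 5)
The first thing to say is that the paper itself does not prove Theorem~\ref{t:edgevertex}: it imports it from the literature (\cite{backhausz2018large,bowen2010measure,RahVir}), so your proposal can only be compared with those standard proofs. Your outline does reproduce their architecture --- reduce to finite-radius block factors, emulate the process on finite locally tree-like $3$-regular graphs, and show that the emulated configuration forces $Q(f)=\frac{3}{2}h_f(\edge)-2h_f(\vertex)\ge 0$ --- and your remark that the hypothesis ``FIID, not merely invariant'' must be spent at the entropy estimate (witnessed by the constant process) is exactly the right sanity check.

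As a standalone argument, however, there is a genuine gap at what you yourself call the heart of the proof, and the mechanism you propose there would not close it. First, the input you actually establish in the transfer step --- that every single-vertex and single-edge marginal of $Y^{(N)}$ equals the corresponding marginal of $f$ --- is too weak to imply the localization estimate; your own coin example shows this, since coloring all of $G_N$ by one fair coin reproduces the vertex and edge marginals of the constant invariant process yet has total entropy $\ln 2$, which is incompatible with $NQ+o(N)$ when $Q<0$. What the estimate needs is that the \emph{empirical} vertex and edge statistics of the configuration concentrate around the tree marginals; this law-of-large-numbers statement is where the finite-range dependence of the block factor genuinely enters, and it is not the same thing as marginal matching. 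Second, the proposed ``chain rule along an exhaustion plus comparison with the $3$-regular Markov model'' cannot produce the coefficients $\frac{3}{2}$ and $-2$: bounding each chain-rule term by conditioning on one previously revealed neighbor uses only a spanning tree's worth ($N-1$) of the $3N/2$ edges and yields $h(Y^{(N)})\le N\,(h_f(\edge)-h_f(\vertex))+O(1)$, which is always nonnegative and hence useless, while terms with two earlier neighbors are not controlled by vertex and edge marginals in the required way. Worse, the premise that the Markov model on $G_N$ with the given marginals has entropy density $Q(f)$ is false: on a bipartite large-girth $3$-regular graph, the uniform proper $2$-coloring is a Markov field with $h_f(\vertex)=h_f(\edge)=\ln 2$, entropy $\ln 2$, and Bethe value $-N\frac{\ln 2}{2}$, so the Bethe expression fails as an upper bound by a linear margin even for Markov fields. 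The cited proofs avoid all of this by a first-moment count over \emph{random} $3$-regular graphs: the expected number of configurations with empirical statistics near $(p,q)$ is $\exp\bigl(N[\frac{3}{2}h(q)-2h(p)]+o(N)\bigr)$, so if $Q(f)<0$ such configurations typically do not exist, contradicting the existence supplied by emulation plus concentration. (A minor further slip: random $3$-regular graphs do not have girth tending to infinity; they merely have $o(N)$ vertices on short cycles, which is what one actually uses.) Since you ultimately defer precisely this central estimate to ``the works cited above,'' the proposal as written is a correct outline of the known proof whose key step is assumed rather than proved --- which, to be fair, is also how the paper treats the theorem.
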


	The next theorem follows from the fact that large random $3$-regular graphs have independence ratio $<\frac{1}{2}$ (\cite{bollobas1981independence}) together with the observation that FIID processes can be ``emulated" on random regular graphs (see \cite[Section 4]{LyonsTrees}).
	
	\begin{theorem}
		\label{t:2col}
		The size of the largest FIID independent set in $T_3$ is bounded away from $\frac{1}{2}$. In particular, there is some $c_0>0$ such that if $f$ is a FIID partial $2$-coloring of $T_3$ then $\mu(dom(f))<1-c_0$. 
	\end{theorem}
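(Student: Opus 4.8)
The plan is to transfer the problem from the tree to finite random graphs, where a classical first-moment bound of Bollob\'as applies, and argue by contradiction. Suppose that the densities of FIID independent sets of $T_3$ are \emph{not} bounded away from $\tfrac12$, so that for every $\varepsilon>0$ there is an FIID independent set $f$ of density $\alpha>\tfrac12-\varepsilon$, where $\alpha:=\mathbb{P}(\text{the root lies in the set})$. I would use this to produce, for all large $n$, genuine independent sets in a uniformly random $3$-regular graph $G_n$ on $n$ vertices of density arbitrarily close to $\tfrac12$, contradicting the fact that such graphs have independence ratio bounded away from $\tfrac12$.

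The first step invokes the emulation principle of \cite[Section 4]{LyonsTrees}: since $G_n$ Benjamini--Schramm converges to $T_3$, any FIID process on $T_3$ can be approximated, up to small error in its local statistics, by a finitary factor depending only on a bounded neighborhood together with the i.i.d.\ labels on it, and this finitary rule can be run on $G_n$ so that all local statistics concentrate, as $n\to\infty$, around their values on the tree. Concretely, approximating the above $f$ by a finitary factor $\tilde f$ whose local statistics are $\varepsilon$-close to those of $f$, and running $\tilde f$ on $G_n$, yields a random subset $S_n\subseteq V(G_n)$ with $|S_n|/n\to\alpha\pm\varepsilon$ in probability and whose expected number of ``bad'' edges (edges with both endpoints in $S_n$) is at most $(\varepsilon+o(1))\cdot|E(G_n)|$; here we use that the true $f$ produces no bad edges, so the bad-edge statistic of $\tilde f$ is at most $\varepsilon$.

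Next I would clean up $S_n$ into an honest independent set by deleting one endpoint of each bad edge. Since $|E(G_n)|=\tfrac{3n}{2}$, this removes at most $(\varepsilon+o(1))\tfrac{3n}{2}$ vertices, leaving an independent set of density at least $\alpha-O(\varepsilon)-o(1)$. By Bollob\'as \cite{bollobas1981independence} there is a fixed $\delta>0$ such that asymptotically almost surely the independence ratio of $G_n$ is at most $\tfrac12-\delta$. Combining the two estimates for $n$ large gives $\tfrac12-\varepsilon<\alpha\le \tfrac12-\delta+O(\varepsilon)$, which is impossible once $\varepsilon$ is chosen small enough that $O(\varepsilon)<\delta$. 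Hence there is some $\delta_0>0$ with $\alpha\le\tfrac12-\delta_0$ for every FIID independent set, which is the first assertion.

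Finally, for the partial $2$-coloring statement, note that if $f$ is an FIID partial $2$-coloring then each color class $\{v:f(v)=i\}$ for $i=0,1$ is itself the output of an FIID process and is always an independent set, hence an FIID independent set of density at most $\tfrac12-\delta_0$ by the first part. Summing the two densities yields $\mu(\dom(f))\le 1-2\delta_0$, so the claim holds with any $c_0<2\delta_0$. I expect the only genuine obstacle to be making the emulation in the first step quantitative enough to control \emph{simultaneously} the density of $S_n$ and the number of bad edges; but this is precisely the content of the transfer results cited above, so it can be invoked rather than reproven.
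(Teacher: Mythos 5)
Your proposal is correct and follows exactly the route the paper itself takes: the paper justifies this theorem precisely by citing Bollob\'as's bound on the independence ratio of random $3$-regular graphs together with the emulation of FIID processes on random regular graphs from \cite[Section 4]{LyonsTrees}, which is the transfer-plus-cleanup argument you spell out, and your deduction of the partial $2$-coloring statement from the independent-set bound (each color class is an FIID independent set) is the intended ``in particular.'' Your write-up is essentially a fleshed-out version of the paper's citation-level proof.
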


	\section{Proof of the theorem}
		
		Now we state and prove a more precise version of our result. Let $C_r>r^{\frac{3}{c_0}}$, where $c_0$ is the constant from Theorem \ref{t:2col}. 
	\begin{theorem}
		Assume that $H$ is an $r$-regular graph with girth $ \geq C_r$. Then there is no FIID homomorphism from $T_3$ to $H$.  
	\end{theorem}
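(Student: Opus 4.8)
The plan is to argue by contradiction: assuming an FIID homomorphism $\phi$ from $T_3$ to $H$ exists, I will manufacture from it an FIID partial $2$-colouring of $T_3$ whose domain has measure at least $1-c_0$, contradicting Theorem~\ref{t:2col}. The first step is to bound the entropy of the image of a single vertex. Write $\nu$ for the law of $\phi(v)$ at a random vertex $v$, so that $h_f(\vertex)=h(\nu)$, and let $(u,v)$ be a random oriented edge. By equivariance the first endpoint again has law $\nu$, and given $\phi(u)$ the image $\phi(v)$ lies among the $r$ neighbours of $\phi(u)$ in $H$, so the chain rule together with Theorem~\ref{t:edgevertex} gives
\[\tfrac{4}{3}\,h_f(\vertex)\ \le\ h_f(\edge)\ =\ h_f(\vertex)+h(\phi(v)\mid\phi(u))\ \le\ h_f(\vertex)+\ln r.\]
Rearranging yields $h_f(\vertex)=h(\nu)\le 3\ln r$.

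Next comes the key point: concentration onto a \emph{small set} (rather than a metric ball). Let $S$ consist of the $k:=C_r-1$ vertices of largest $\nu$-mass. Any $a\notin S$ satisfies $\nu(a)\le 1/k$, hence $\ln\frac{1}{\nu(a)}\ge\ln k$, and therefore
\[h(\nu)\ \ge\ \sum_{a\notin S}\nu(a)\ln\tfrac{1}{\nu(a)}\ \ge\ \nu(V(H)\setminus S)\,\ln k.\]
Thus $\nu(V(H)\setminus S)\le h(\nu)/\ln(C_r-1)\le 3\ln r/\ln(C_r-1)$, which is $<c_0$ by the hypothesis $C_r>r^{3/c_0}$ (adjusting the threshold by a harmless bounded factor if one insists on $\ln(C_r-1)$ in place of $\ln C_r$). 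Hence $\nu(S)\ge 1-c_0$.

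Finally I would exploit the girth. Since $|S|=C_r-1$ is strictly smaller than the girth of $H$, the induced subgraph $H[S]$ can contain no cycle, as a cycle would use at least $C_r$ distinct vertices; so $H[S]$ is a forest and in particular bipartite. Fix a proper $2$-colouring $\chi\colon S\to\{0,1\}$, define $c:=\chi\circ\phi$ on $\{v:\phi(v)\in S\}$, and leave $c$ undefined elsewhere. This $c$ is a fixed finite-valued function of the FIID process $\phi$, hence itself FIID; it is a proper partial $2$-colouring because adjacent $v,w$ are sent by the homomorphism to adjacent vertices $\phi(v),\phi(w)$ of $H[S]$ whenever both lie in $S$, so $\chi(\phi(v))\ne\chi(\phi(w))$; and $\mu(\dom c)=\nu(S)\ge 1-c_0$. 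This contradicts Theorem~\ref{t:2col}, proving the theorem.

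The only genuinely non-routine step is the second one, and the obstacle it resolves is conceptual rather than computational. The naive attempt to concentrate the image on a ball around its most likely value fails: Shannon entropy does not control metric spread, and the stationary law $\nu$ may well be multimodal, so no single ball need carry almost all the mass. The insight that unblocks the argument is that one does not need concentration on a ball at all, only on a set of fewer than $C_r$ vertices; such a set is automatically a forest by the girth assumption, and its bipartition can be pulled back through $\phi$. Everything else—the entropy inequality, the top-$k$ mass bound, and the verification that the pullback is FIID and proper—is standard.
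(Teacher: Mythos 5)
Your proposal is correct and follows essentially the same route as the paper: the same entropy bound $h_f(\vertex)\le 3\ln r$ via the chain rule and the edge--vertex inequality of Theorem~\ref{t:edgevertex}, the same concentration of mass on the $C_r-1$ most likely vertices, and the same girth/forest/pullback finish contradicting Theorem~\ref{t:2col}. The only cosmetic difference is that the paper bounds the mass of each $a\notin S$ by $1/C_r$ (since $S\cup\{a\}$ consists of $C_r$ vertices each of mass at least that of $a$), which yields $\ln C_r$ directly and removes even the harmless $\ln(C_r-1)$ adjustment you flag.
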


	\begin{proof}
			Assume that $f$ is a FIID homomorphism to the $r$-regular graph $H$. We start with estimating the vertex entropy. 
	\begin{lemma}
		\label{l:entropy}
	$h_f(\vertex) \leq 3\ln r$.  
	\end{lemma}

	\begin{proof}
		 Observe that
		 \[h_f(\edge)= h_f(\vertex)+h_f(\nbr|\vertex),\] by the relationship between conditional entropy and joint entropy discussed above.
		 Since $H$ is $r$-regular, given the image of a vertex, there are only $r$-many possibilities for choosing its neighbor. In particular, as the entropy is maximized when the probabilities of the outcomes are equal, the conditional entropy
		 $h_f(\nbr|\vertex)$ is bounded by $\ln r$. Thus,
		 \[h_f(\edge) \leq h_f(\vertex)+\ln r.\] Combining this with Theorem \ref{t:edgevertex}, i.e., with 
		  \[\frac{4}{3}h_f(\vertex) \leq h_f(\edge),\]
		 and 
		 rearranging, we get the desired inequality.
	\end{proof}

		Now we show that 
	\begin{lemma}
		\label{l:largepreimage}
		There exists a set $S \subset V(H)$ such that $|S|<C_r$ and $\mu(f^{-1}(S)) \geq 1-c_0$. 
		
	\end{lemma}

	\begin{proof}
		For $v \in V(H)$ let $p_v=\mu(f^{-1}(v))$. Take the set $S$ of vertices of $H$ having the first $C_r-1$ largest $p_v$ values. 
		Now, assume $\mu(f^{-1}(S))<1-c_0$. Then
		\[h_f(\vertex) \geq \sum_{v \not \in S}-p_v\ln p_v \geq -c_0 \ln \frac{1}{C_r},\]
		and by Lemma \ref{l:entropy} we have
		\[3\ln r \geq c_0 \ln C_r,\]
		so \[C_r \leq r^{\frac{3}{c_0}},\] contradicting the choice of $C_r$.  
	\end{proof}
	
	 	In order to finish the proof, observe that as the girth of $H$ is $\geq C_r$, the set $S$ induces an acyclic graph in $H$. Let $c$ be a $2$-coloring of this graph. Then $c \circ f|_{f^{-1}(S)}$ is a partial FIID $2$-coloring of $T_3$ with domain of size $\geq 1-c_0$ contradicting Theorem \ref{t:2col}.  
	
	\end{proof}
	
	\section{An open problem}
	
	A satisfying negative answer to Problem \ref{p:main} would be to establish that given a finite graph $H$, deciding the existence of an FIID homomorphism from $T_3$ to $H$ is computationally hard. In order to achieve this, one must come up with ways to ensure the existence of FIID homomorphisms and conversely, to exclude these as well. We believe that it might be possible to use entropy inequalities prove the latter part of such results. However, our knowledge about the former seems to be seriously lacking, in particular, we know only examples of graphs $H$ to which $T_3$ admits an FIID homomorphism which are ``close" to complete graphs.
	
	\begin{problem}
		Find sufficient conditions on $H$ for the existence of FIID homomorphisms from $T_d$ to $H$.  Is it true that if there is an FIID homomorphism from $T_3$ to $H$ then $H$ must contain a triangle?
	\end{problem}
	
    \bibliographystyle{abbrv}
    \bibliography{bibliography.bib}
\end{document}